\newcounter{minutes}\setcounter{minutes}{\time}
\newcounter{hours}\setcounter{hours}{\time}
\newtheorem{theorem}{Theorem}[section]
\newtheorem{lemma}{Lemma}[section]
\newtheorem{corollary}{Corollary}[section]
\theoremstyle{remark}
\numberwithin{equation}{section}
\DeclareMathOperator{\IM}{Im}
\DeclareMathOperator{\RM}{Re}
\begin{document}
\setcounter{page}{1}

\title[Generalized Bessel functions]{Relations between the generalized Bessel functions and the Janowski class}

\author[S. Kanas, S. R. Mondal and A. D. Mohammed]{S. Kanas$^{1}$, S. R. Mondal$^2$  and A. D. Mohammed$^3$}
\address{$^{1}$ Department of Mathematical Analysis,
Faculty of Mathematics and Natural Sciences,
University of Rzeszow,
ul. St. Pigonia 1,
35-310 Rzeszow, Poland}
\email{skanas@ur.edu.pl}

\address{$^{2, 3}$ Department of Mathematics and Statistics, College of Science,
King Faisal University, Al-Hasa 31982, Hofuf, Saudi Arabia.}
\email{smondal@kfu.edu.sa, albhishi1900@gmail.com}

\subjclass[2010]{34B30, 33C10, 30C80, 30C45.}

\keywords{convexity, Janowski convexity, starlike functions,
generalized Bessel functions, differential subordination}

\begin{abstract}
We are interested in finding the sufficient conditions on $A$, $B$,
$\lambda$, $b$ and $c$  which ensure that the generalized Bessel
functions ${u}_{\lambda}:={u}_{\lambda,b,c}$  satisfies the
subordination ${u}_{\lambda}(z) \prec (1+Az)/ (1+Bz)$.  Also,
conditions for which ${u}_{\lambda}(z)$ to be Janowski convex, and
$z{u}'_{\lambda}(z)$ to be Janowski starlike in the unit disk
$\mathbb{D}=\{z \in \mathbb{C}: |z|<1\}$ are obtained.
\end{abstract}
 \maketitle

\section{Introduction}

We will denote by $\mathcal{A}$ the set of  functions $f$, analytic
in the open unit disk $\mathbb{D}=\{z: |z|<1\}$, and normalized by
the conditions  $f(0) = 0 = f'(0)-1$. If $f$ and $g$ are  analytic
in $\mathbb{D}$, then $f$ is \textit{subordinate} to $g$, written
$f\prec g$ (or $f(z) \prec g(z),$ $\ z \in \mathbb{D}$),  if there
is an analytic self-map $w$ of $\mathbb{D}$, satisfying $w(0)=0$ and
such that $f = g \circ w$. From now on, for $-1 \leq B < A \leq 1$\
the set $\mathcal{P}[A,B]$ denotes a family of functions $p(z) = 1+
c_{1}z + \cdots$, analytic  in $\mathbb{D}$ and satisfying
\[ p(z) \prec \frac{1+Az}{1+Bz}.\]
That family, known as the \textit{Janowski class of functions}
\cite{Janowski}, contains several other sets. For instance, if $0
\leq \beta <1$, then $ \mathcal{P}[1-2 \beta, -1]$ is the class of
functions $p(z)= 1+ c_{1}z + \cdots$ satisfying $ \RM p(z)
> \beta$ in $\mathbb{D}$ which, in the limiting case $\beta =0$, reduces
to the classical C\'{a}rath\`{e}odory class $\mathcal{P}$.

In relation to $\mathcal{P}[A,B]$ several subclasses of
$\mathcal{A}$ were defined, for example $\mathcal{S}^\ast[A, B]$,
which is called \textit{a class of Janowski starlike functions}
\cite{Janowski} and that consists of $ f \in \mathcal{A}$ satisfying
\[ z f'(z)/f(z)\in \mathcal{P}[A,B].\]
For $0 \leq \beta <1$, $ \mathcal{S}^\ast[1-2 \beta, -1]:=
\mathcal{S}^\ast(\beta)$ is the usual class of \textit{starlike
functions of order} $\beta$; $\mathcal{S}^\ast[1- \beta, 0]:=
\mathcal{S}^\ast_{\beta} = \{f \in \mathcal{A} : | z f'(z)/f(z) -1|
< 1- \beta \}$, and $\mathcal{S}^\ast[\beta, -
\beta]:=\mathcal{S}^\ast[\beta]= \{f \in \mathcal{A} : | z
f'(z)/f(z) -1| < \beta | z f'(z)/f(z) +1|\}$. These classes have
been studied, for example, in \cite{Ali-seeni-ijmms,
Ali-Chandra-aml}. A function $f \in \mathcal{A}$ is said to be
\textit{close-to-convex of order} $\beta$  if $\RM\left(
zf'(z)/{g(z)}\right)> \beta$ for some
 $g \in \mathcal{S}^\ast :=\mathcal{S}^\ast(0)$ \cite{Goodman-book, Miller-Mocanu-book}.

The second order differential equation of a real variable $x$ of a
form
\begin{equation}\label{Bessel_diff}
x^2u''+xu'+(x^2-\nu^2)u=0,
\end{equation}
is known as \textit{the Bessel differential equation}, where the
solutions of the Bessel equation yields the Bessel functions $J_\nu,
Y_\nu$ of the first and second kind , and $u=CJ_\nu(x)+DY_\nu(x)$
\cite[p. 217]{Olver_Lozier}. Here $C$ and $D$ are the arbitrary
constants and $\nu$ is an arbitrary complex number (the order of
Bessel function). The Bessel functions are named for Bessel however
Bernoulli is generally credited with being the first who introduced
the concept of Bessel's functions in 1732, when solved the hunging
chain problem. It is know that the Bessel function of the first kind
of order $\nu$ is defined by \cite{Olver_Lozier}
\begin{equation}\label{Bessel1}
J_\nu(x)=\sum\limits_{n=0}^\infty\frac{(-1)^n}{n!\Gamma(\nu+n+1)}\left(\frac{x}{2}\right)^{2n+\nu},
\quad x\in\mathbb{R}.
\end{equation}

Bessel's equation arises when solving the Laplace's and Helmholtz
equation and are therefore especially important for many problems of
wave propagation and static potentials. In finding the solution in
cylindrical coordinate systems, one obtains Bessel functions of
integer order ($\nu = n$), and in spherical problems one obtains
half-integer orders ($\nu = n+1/2$). There are several interesting
facts concerning the Bessel functions, in particular the connections
between Bessel functions and Legendre polynomials, hypergeometric
functions, the usual trigonometric functions and other.

The Bessel functions are valid  for complex arguments $x$, and an
important special case is that of a purely imaginary argument. In
this case, the solutions to the Bessel equation are called
\textit{the modified Bessel functions} or \textit{the hyperbolic
Bessel functions} of the first and second kind.  Several
applications have an impact of various generalizations and
modifications.

A second order differential equation which reduces to
\eqref{Bessel_diff} reads as follows
\begin{equation}\label{Bessel_diff_1}
x^2v''+bxv'+(cx^2-\nu^2+(1-b)\nu)u=0,
\end{equation}
$b,c,\nu \in \mathbb{R}$. A particular solution $v_\nu$ has the form
\begin{equation}\label{sol1}
v_\nu(x)=\sum\limits_{n=0}^\infty \frac{(-1)^n c^n
}{n!\Gamma(\nu+n+(b+1)/2}\left(\frac{x}{2}\right)^{2n+\nu},
\end{equation}
and is called \textit{the generalized Bessel function of the first
kind of order} $\nu$ \cite{Baricz2}. It is readily seen that for $b
= 1$ and $c = 1$, $v_\nu$ becomes  $J_\nu$.

Study of the geometric properties of some cases of Bessel functions,
like univalence, starlikeness and convexity were initiated in the
sixties by Brown \cite{Brown}, and also by Kreyszig and Todd
\cite{Kreyszig}, but a major contributions to the development of a
theory  in this direction was made by Baricz et al. see, for example
\cite{Baricz1} - \cite{Baricz-Szasz}.  Motivated by the importance
of the Bessel functions and the  results in the theory of univalent
functions we make a contribution to the subject, by obtaining some
necessary and sufficient conditions for Janowski starlikeness and
convexity of the generalized Bessel functions of the first kind.

For $b, \lambda, c \in \mathbb{C}$ and $\kappa$  such that $\kappa
=\lambda+(b+1)/2 \neq 0,-1,-2,-3, ...$ we denote by
$u_\lambda={u}_{\lambda,b,c}$ the \textit{normalized, generalized
Bessel function of of the first kind of order} $\lambda$ given by
the power series
\begin{align}\label{eqn:conf-hyp}
u_\lambda(z) =
2^\lambda\Gamma(\kappa+1)z^{-\lambda/2}J_\kappa(\sqrt{z})={}_0F_{1}\left(\kappa,\frac{-c}{4}z\right)
= \sum\limits_{k=0}^\infty \frac{(-1)^k
c^k}{4^k(\kappa)_k}\frac{z^k}{k!},
\end{align}
convergent for all $z$ on the complex plane. We note that
$u_\lambda(0)=1$, $u_\lambda$  is analytic in $\mathbb{D}$ and is a
solution of the differential equation
\begin{align}\label{eqn:kumar-hypr-ode}
4z^2 u''(z)+4\kappa zu'(z)+c z u(z)=0.
\end{align}
This normalized and generalized Bessel function also satisfy the
following recurrence relation \cite{Baricz1}
\begin{align}\label{eqn:kumar-hypr-recur-1}
4\kappa u'_\lambda(z)=-c u_{\lambda+1} (z),
\end{align}
which is an useful tool to study several geometric properties of
$u_\lambda$. There has been several papers, where geometric properties of
$u_\lambda$ such as on a close-to-convexity, starlikeness and convexity,
radius of starlikeness and convexity, were studied
\cite{Baricz1,Baricz2,Baricz-Pswamy,Baricz-Szasz,Selinger,Szasz-Kupan}.

In this paper we systematically study the properties of the
generalized Bessel function, specially Janowski convexity and
Janowski starlikeness  of that function.

In the section $\ref{section-3}$  of this paper, the sufficient
conditions on $A$, $B$, $c$, $\kappa$ are determined  that will
ensure that $u_\lambda$ satisfies the subordination $u_\lambda(z)
\prec (1+Az)/ (1+Bz)$. It is understood that a
computationally-intensive methodology is required to obtain the
results in this general framework. The benefits of such general
results are that by judicious choice of the parameters $A$ and $B$,
they give rise to several interesting applications, which include
extension of the results of previous works. Using this subordination
result, sufficient conditions are obtained for $(-4\kappa/c)
u'_\lambda \in \mathcal{P}[A, B]$, which next readily gives
conditions for $(-4\kappa/c)( u_\lambda-1)$ to be close-to-convex.
Section $\ref{section-4}$ gives emphasis to the investigation of
$u_\lambda$ to be Janowski convex as well as of $z u'_\lambda$ to be
Janowski starlike.

The following lemma is needed in the sequel.
\begin{lemma}\cite{miller-dif-sub, Miller-Mocanu-book}\label{lem:miller-mocanu-1}
Let $\Omega \subset \mathbb{C}$, and $\Psi : \mathbb{C}^2 \times
\mathbb{D} \to \mathbb{C}$ satisfy
\begin{equation}\label{l1.1}
 \Psi( i \rho , \sigma; z) \not \in \Omega,
\end{equation}
for  real $\rho$ and $\sigma$ such that $\sigma \leq -(1+\rho^2)/2$,
$z \in \mathbb{D}$. If $p$ is analytic in $\mathbb{D}$ with
$p(0)=1$, and $\Psi( p(z), z p'(z); z) \in \Omega$ for $ z \in
\mathbb{D}$, then $\RM  p(z) > 0$ in $\mathbb{D}$. In the case $\Psi
: \mathbb{C}^3 \times \mathbb{D} \to \mathbb{C}$, then the condition
\eqref{l1.1} is generalized to
\begin{equation}\label{l1.2}
 \Psi( i \rho , \sigma, \mu + i \nu; z) \not \in \Omega,
\end{equation}
where  $\rho, \sigma, \mu$ are real and such that  $\sigma+\mu \le
0$ and $ \sigma \le -(1+\rho^2)/2$.\end{lemma}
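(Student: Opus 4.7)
My plan is to prove this classical Miller--Mocanu lemma by contradiction, using a level-set argument combined with a Jack-type estimate. Starting from $p(0)=1$, so that $\RM p(0)=1>0$, I would assume the conclusion fails, i.e.\ that $\RM p(z_1)\le 0$ for some $z_1\in\mathbb{D}$. Continuity of $\RM p$ along the segment from $0$ to $z_1$ then produces a first point $z_0\in\mathbb{D}$ (of minimal modulus) at which $\RM p(z_0)=0$, so that $p(z_0)=i\rho$ for some real $\rho$, while $\RM p(z)>0$ on the open disk $\{|z|<|z_0|\}$.

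The analytic core is to show that $\sigma:=z_0\,p'(z_0)$ is automatically real and satisfies $\sigma\le -(1+\rho^2)/2$. The cleanest device is the M\"obius transform $g(z)=(1-p(z))/(1+p(z))$, which sends the right half-plane conformally onto $\mathbb{D}$. Since $\RM p>0$ inside the smaller disk and $\RM p(z_0)=0$ on its boundary, $g$ satisfies $|g(z)|<1$ for $|z|<|z_0|$ and $|g(z_0)|=1$, so Jack's lemma yields a real number $m\ge 1$ with $z_0 g'(z_0)/g(z_0)=m$. A direct computation of $g'$, followed by the simplification $1-p(z_0)^2=1+\rho^2$, then gives
\[
z_0\,p'(z_0)=-\tfrac{m}{2}(1+\rho^2)\le -\tfrac{1}{2}(1+\rho^2),
\]
which is manifestly real. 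Feeding this into hypothesis \eqref{l1.1} at the interior point $z_0$ yields $\Psi(p(z_0),z_0 p'(z_0);z_0)=\Psi(i\rho,\sigma;z_0)\notin\Omega$, contradicting the standing assumption $\Psi(p(z),zp'(z);z)\in\Omega$ on $\mathbb{D}$. Therefore $\RM p>0$ throughout $\mathbb{D}$.

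For the three-variable extension, the same contradiction scheme applies, but one must in addition identify the real and imaginary parts of $z_0^2 p''(z_0)+z_0 p'(z_0)=\mu+i\nu$ at the same critical point. The idea is to differentiate the identity $zg'(z)/g(z)=m$ at $z_0$ once more and take real parts; Jack-type information on $g$ at a boundary-touching maximum gives $\RM\!\bigl\{1+z_0 g''(z_0)/g'(z_0)\bigr\}\ge m$, and expanding this inequality back in terms of $p,p',p''$ converts it, after cancellation, into $\sigma+\mu\le 0$. Hypothesis \eqref{l1.2} then rules out $\Psi(i\rho,\sigma,\mu+i\nu;z_0)\in\Omega$, completing the contradiction.

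The step I expect to be the main obstacle is this second-order passage: keeping a clean bookkeeping of the real and imaginary parts after differentiating the M\"obius transform twice, so that the estimates coming from Jack's lemma package precisely into the two scalar inequalities $\sigma\le -(1+\rho^2)/2$ and $\sigma+\mu\le 0$. The first-order case is a short calculation; the refinement for $\mathbb{C}^3\times\mathbb{D}$ requires a careful second-derivative expansion of $g=(1-p)/(1+p)$ and a delicate choice of which combination of $p''$ and $p'$ is used as the third argument of $\Psi$.
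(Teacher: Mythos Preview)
The paper does not prove this lemma at all: it is quoted verbatim from Miller and Mocanu \cite{miller-dif-sub, Miller-Mocanu-book} and used as a black box in the proofs of Theorems~\ref{thm:-janw-cc} and~\ref{thm:jan-convex}. So there is no ``paper's own proof'' to compare against.

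Your outline is essentially the standard Miller--Mocanu argument and the first-order part is fine. One point in the second-order sketch should be tightened: you write that you will ``differentiate the identity $zg'(z)/g(z)=m$ at $z_0$ once more,'' but this equality is not an identity in $z$---Jack's lemma gives it only at the single extremal point $z_0$---so it cannot be differentiated. The inequality $\RM\{1+z_0 g''(z_0)/g'(z_0)\}\ge m$ that you need is obtained instead by a separate second-order maximum-modulus argument (as in the original Miller--Mocanu boundary lemma), not by differentiating the first-order conclusion. Once that step is handled correctly, translating back through $g=(1-p)/(1+p)$ does yield $\sigma+\mu\le 0$ as you claim, and the contradiction goes through.
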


\section{Membership of the generalized Bessel functions of the Janowski class}\label{section-3}

In this section we shall discuss the problem of the membership of
the generalized Bessel function in the Janowski class. We find the
conditions under which $u_\lambda\in\mathcal{P}[A,B]$ and provide
several consequences of that fact.

\begin{theorem}\label{thm:-janw-cc}
Let $ -1 \le B <A \le 1$. Suppose $c, p, b  \in \mathbb{C}$  and $\kappa =p+(b+1)/2 \neq 0,-1,-2,-3 \cdots$,
satisfy
\begin{equation} \label{eqn:thm-janw-cc-0}
\RM(\kappa-1) \ge
\left\{\begin{array}{lcr}\dfrac{|c|}{4(1+A)}\left(\sqrt{2(1+A^2)}+(1-A)\right)&for&-1=B<A\le 3-2\sqrt{2},\\
\dfrac{|c|(1+A)}{8\sqrt{A}} \quad {and} \quad \RM(\kappa-1)\le \dfrac{|c|(1+A)}{2(1-A)}&for&B=-1, A>3-2\sqrt{2},\\
\dfrac{|c|(1+A)(1-B)^2}{4(A-B)(1+B)}-\dfrac{1+B}{(1-B)} &for&-1<B<0, \\
\dfrac{|c|(1+A)(1+B)}{4(A-B)}-\dfrac{1-B}{1+B}&for&B\ge0.\end{array}
\right.
\end{equation}
If $(1+B)u_\lambda \neq (1+A)$,  then $u_\lambda  \in
\mathcal{P}[A,B]$.
\end{theorem}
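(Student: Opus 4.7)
The strategy is to convert the subordination $u_\lambda \prec (1+Az)/(1+Bz)$ into a Carath\'{e}odory-type positivity statement $\RM p(z) > 0$ and then apply the second-order form of Lemma \ref{lem:miller-mocanu-1}.

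First, introduce
\[
p(z):=\frac{(1-B)u_\lambda(z)-(1-A)}{(1+A)-(1+B)u_\lambda(z)}.
\]
The hypothesis $(1+B)u_\lambda\neq(1+A)$ makes $p$ analytic in $\mathbb{D}$, and $p(0)=1$ by direct substitution. Inverting this M\"{o}bius change of variable gives
\[
u_\lambda=\frac{(1+A)p+(1-A)}{(1+B)p+(1-B)},
\]
and combining this with the Cayley transform $w=(p-1)/(p+1)$ shows that $u_\lambda\prec(1+Az)/(1+Bz)$ is equivalent to $\RM p(z)>0$ on $\mathbb{D}$. It therefore suffices to establish the latter.

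Next, I differentiate the inverse relation to express $u_\lambda'$ in terms of $p,p'$ and $u_\lambda''$ in terms of $p,p',p''$, and plug these formulas into the generalized Bessel ODE \eqref{eqn:kumar-hypr-ode}, $4z^2u_\lambda''+4\kappa zu_\lambda'+cz u_\lambda=0$. Clearing the common denominator $[(1+B)p+(1-B)]^3$ yields a polynomial identity
\[
\Psi\bigl(p(z),\,zp'(z),\,z^2p''(z);\,z\bigr)\equiv 0,\qquad z\in\mathbb{D},
\]
for an explicit $\Psi:\mathbb{C}^3\times\mathbb{D}\to\mathbb{C}$. Taking $\Omega=\{0\}$ in the $\mathbb{C}^3$ version of Lemma \ref{lem:miller-mocanu-1}, the inclusion $\Psi(p,zp',z^2p'';z)\in\Omega$ is automatic, and I only need to verify the admissibility condition $\Psi(i\rho,\sigma,\mu+i\nu;z)\neq 0$ for all real $\rho,\nu$, all real $\sigma,\mu$ with $\sigma\le-(1+\rho^2)/2$ and $\sigma+\mu\le 0$, and all $z\in\mathbb{D}$. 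Once this is done, Lemma \ref{lem:miller-mocanu-1} gives $\RM p>0$.

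The admissibility check is where the work concentrates, and it is the origin of the four-way case split in \eqref{eqn:thm-janw-cc-0}. On substituting $p=i\rho$, the denominator factor $(1+B)i\rho+(1-B)$ behaves qualitatively differently according as $B=-1$, $-1<B<0$, or $B\ge 0$, which already accounts for three of the branches. Within $B=-1$, the extremum of the resulting rational function of $\rho$ is attained either at an interior critical point (giving branch one, which requires $A\le 3-2\sqrt{2}$ for that critical point to be admissible) or at a boundary value of $\rho$ (giving branch two, whose additional upper bound $\RM(\kappa-1)\le|c|(1+A)/[2(1-A)]$ is precisely the condition that keeps the critical point inside the feasible region). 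In each branch I would specialize to the extremal choice $\sigma=-(1+\rho^2)/2$, use $\mu\le-\sigma$ to control the $\mu$-dependence, absorb the free imaginary part $\nu$ by the triangle inequality, allow $|z|\le 1$, and then minimize the resulting expression in $\rho$ to read off the explicit bound on $\RM(\kappa-1)$. This case-by-case optimization is the main technical obstacle, but it is computational rather than conceptual.
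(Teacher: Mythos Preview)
Your outline is essentially the paper's proof: the same M\"obius substitution $p$, the same reduction of $u_\lambda\prec(1+Az)/(1+Bz)$ to $\RM p>0$, the same use of Lemma~\ref{lem:miller-mocanu-1} with $\Omega=\{0\}$ after inserting into the ODE \eqref{eqn:kumar-hypr-ode}, and the same four-way split on $B$.

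Two of your descriptions are inaccurate, though neither derails the strategy. First, $\nu$ never needs to be ``absorbed by the triangle inequality'': the third argument enters $\Psi$ linearly (it comes from the $z^2u_\lambda''$ term), so $\RM\Psi(i\rho,\sigma,\mu+i\nu;z)$ depends on $\mu$ but not on $\nu$; what is actually used is $\mu\le-\sigma$ together with $\sigma\le-(1+\rho^2)/2$ applied to the $\kappa s$ term. Second, you have the two $B=-1$ subcases reversed. In the paper the interior-critical-point analysis---maximizing the exact function
\[
H(\rho)=-\tfrac12\RM(\kappa-1)(1+\rho^2)+\tfrac{|c|}{4(1+A)}\sqrt{(1-A)^2+(1+A)^2\rho^2}
\]
over $\rho\in\mathbb{R}$---is what produces the \emph{second} branch, valid for $A>3-2\sqrt{2}$; the accompanying upper bound on $\RM(\kappa-1)$ is precisely the condition $\rho_0^2\ge 0$ ensuring that the maximizer is real. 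The \emph{first} branch (small $A$) is obtained instead from the cruder estimate $|(1-A)+(1+A)i\rho|\le(1-A)+(1+A)|\rho|$, which turns the problem into an explicit quadratic in $|\rho|$. There is no ``boundary value of $\rho$'' anywhere, since $\rho$ ranges over all of $\mathbb{R}$.
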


\begin{proof}
Define the analytic function $p : \mathbb{D} \to  \mathbb{C}$ by
\begin{align*}
p(z) = - \frac{(1-A)- (1-B)u_\lambda(z)}{ (1+A) - (1+B)
u_\lambda(z)},\quad p(0)=1.
\end{align*}
Then, a simple computation yields
\begin{align}\label{eqn-thm-1-1}
 u_\lambda(z)& = \frac{(1-A) + (1+A) p(z)}{(1-B) + (1+B) p(z)},\\ \label{eqn-thm-1-2}
u_\lambda'(z)& = \frac{ 2 (A-B) p'(z)}{((1-B) + (1+B) p(z))^2 },
\end{align}
and
\begin{align}\label{eqn-thm-1-3}
 u_\lambda''(z) =\frac{2 (A-B)[(1-B) + (1+B) p(z)] p''(z)
- 4 (1+B) (A-B) {p'}^2(z)}{( (1-B) + (1+B) p(z) )^3}.
\end{align}
Using the identities $(\ref{eqn-thm-1-1})$ -- $(\ref{eqn-thm-1-3})$,
the Bessel differential equation $(\ref{eqn:kumar-hypr-ode})$ can be
rewritten as
\begin{equation}\label{eqn:thm-1-ode} \nonumber\begin{array}{rcl}
z^2 p''(z) &-& \dfrac{ 2( 1+B)}{(1-B)+ (1+B)p(z)}  (zp'(z))^2 + \kappa zp'(z)\\
& & \\
&+& \dfrac{((1-B) + (1+B) p(z) ) ((1-A)+ (1+A)p(z))}{ 8 (A-B)} cz =
0.\end{array}
\end{equation}

Suppose $\Omega = \{0\}$, and  define $\Psi(r, s, t;z)$ by
\begin{align}\label{eqn:thm-1-ode-2}
\Psi(r, s, t;z) := t &- \frac{ 2( 1+B)}{(1-B)+ (1+B)r} s^2 + \kappa
s + \frac{((1-B) + (1+B)r)((1-A)+ (1+A)r)}{8(A-B)} cz.
\end{align}
The equation  $(\ref{eqn:thm-1-ode})$ yields that $ \Psi(p(z), z
p'(z), z^2 p''(z); z) \in \Omega.$  To ensure $\RM p(z) >0$ for $z
\in \mathbb{D}$ we will use the Lemma $\ref{lem:miller-mocanu-1}$.
Hence, it suffices to establish $ \RM \Psi( i\rho, \sigma, \mu+ i
\nu; z) \le 0 $ in $\mathbb{D}$  for  real $\rho, \sigma$ such that
$\sigma \leq -(1+\rho^2)/2$, and $\sigma+\mu \leq 0$. Applying those
inequalities we obtain
\begin{equation}\label{ineq:re-psi}
\begin{array}{rclcl}
\RM \Psi( i\rho, \sigma, \mu+ i \nu; z) & \leq
&-\dfrac{\RM(\kappa-1)}{2} (1+\rho^2)
&-&\dfrac{2(1-B^2)\sigma^2}{(1-B)^2+(1+B)^2\rho^2}\\
&& &+& \RM
 \dfrac{[(1-B)+(1+B)i\rho ][(1-A)+(1+A)i\rho]}{8(A-B)} cz\\
&\le &-\dfrac{\RM(\kappa-1)}{2} (1+\rho^2)
&-&\dfrac{(1-B^2)(1+\rho^2)^2}{2[(1-B)^2+(1+B)^2\rho^2]} \end{array}
\end{equation}
$$\hspace{7cm}+\
\dfrac{|(1-B)+(1+B)i\rho||(1-A)+(1+A)i\rho||c|}{8(A-B)}.$$

The proof will be divided into four cases. Consider first  $B=-1,
B<A\le 3-2\sqrt{2}$. The inequality \eqref{ineq:re-psi} reduces then
to the following
$$\begin{array}{rcl}\RM \Psi( i\rho, \sigma, \mu+ i \nu;
z)&\le&
-\dfrac{\RM(\kappa-1)(1+\rho^2)}{2}+\RM\dfrac{[(1-A)+(1+A)i\rho]cz}{4(1+A)}\\
&\le&-\dfrac{\RM(\kappa-1)(1+\rho^2)}{2}+\dfrac{|c|}{4(1+A)}[(1-A)+(1+A)|\rho|]\\
&=&-\dfrac{\RM(\kappa-1)}{2}\rho^2+\dfrac{|c|}{4}|\rho|+\dfrac{|c|(1-A)}{4(1+A)}-\dfrac{\RM(\kappa-1)}{2}\\
&=&-\dfrac{\RM(\kappa-1)}{2}\left(|\rho|-\dfrac{|c|}{4\RM(\kappa-1)}\right)^2+\dfrac{|c|^2}{32\RM(\kappa-1)}
+\dfrac{|c|(1-A)}{4(1+A)}-\dfrac{\RM(\kappa-1)}{2}\\
 &=:&G(\rho).\end{array}$$
A quadratic function $G$ takes nonpositive values for any $\rho$, if
$$\frac{|c|^2}{32\RM(\kappa-1)}+\frac{|c|(1-A)}{4(1+A)}-\frac{\RM(\kappa-1)}{2}\le
0.$$ The last inequality may be rewritten as
$$-\RM^2(\kappa-1)+\frac{|c|(1-A)}{2(1+A)}\RM(\kappa-1)+\frac{|c^2|}{16}\le
0,$$or
$$-\left(\RM(\kappa-1)-\frac{|c|(1-A)}{4(1+A)}\right)^2+\frac{|c|^2(1-A)^2}{16(1+A)^2}+\frac{|c|^2}{16}\le
0,$$that holds, if
$$\RM(\kappa-1) \ge
\frac{|c|}{4(1+A)}\left(\sqrt{(1-A)^2+(1+A)^2}+(1-A)\right),$$ which
reduces to the assumption. Therefore the assertion follows.

In the second case we consider $B=-1, A>3-2\sqrt{2}$.  According to
\eqref{ineq:re-psi}, we have
$$\begin{array}{rcl}\RM \Psi( i\rho, \sigma, \mu+ i \nu;
z)&\le&
-\dfrac{\RM(\kappa-1)(1+\rho^2)}{2}+\dfrac{|(1-A)+(1+A)i\rho||c|}{4(1+A)}\\
&=&-\dfrac{\RM(\kappa-1)(1+\rho^2)}{2}+\dfrac{|c|}{4(1+A)}\sqrt{(1-A)^2+(1+A)^2\rho^2}\\
&=:&H(\rho).\end{array}$$ We note that the function $H$ is even with
respect to  $\rho$, and
$$H(0) = \frac{|c|(1-A)}{4(1+A)}-\frac{\RM(\kappa-1)}{2},$$
that satisfies $H(0) \le 0$, if
\begin{equation}\label{k1}
\RM(\kappa-1)\ge \frac{|c|(1-A)}{2(1+A)}.
\end{equation}
Moreover $\lim\limits_{\rho\to\infty}H(\rho)=-\infty$, and
$$H'(\rho)=-\RM(\kappa-1)\rho+\frac{|c|(1+A)\rho}{4\sqrt{(1-A)^2+(1+A)^2\rho^2}},$$
with $H'(\rho)=0$ if and only if $\rho=0$ or
$$\rho^2_0=\frac{|c|^2}{16\RM^2(\kappa-1)}-\frac{(1-A)^2}{(1+A)^2}.$$
We observe that $\rho_0^2\ge 0$ by the inequality
$$\frac{|c|^2}{16\RM^2(\kappa-1)}\ge\frac{(1-A)^2}{(1+A)^2},$$
or
\begin{equation}\label{k2}
\RM(\kappa-1)\le \frac{|c|(1+A)}{4(1-A)}.
\end{equation}
Additionally
$$H''(\rho_0)=-\RM(\kappa-1)+\dfrac{16\RM^3(\kappa-1)(1-A)^2}{|c|^2(1+A)^2}\le
0,$$  in view of \eqref{k2}. Hence $H(\rho_0)=H_{\max}(\rho)$, and
$$H(\rho_0)=\dfrac{|c|^2}{32\RM(\kappa-1)}-\dfrac{\RM(\kappa-1)}{2}\left[1-\left(\dfrac{1-A}{1+A}\right)^2\right] \le 0$$
that holds if
\begin{equation}\label{k3}
\RM(\kappa-1)\ge \dfrac{|c|(1+A)}{8\sqrt{A}}.
\end{equation}
Since $$\dfrac{|c|(1+A)}{4(1-A)}\ge \dfrac{|c| (1+A)}{8\sqrt{A}} \ge
\dfrac{|c|(1-A)}{2(1+A)}$$ holds for $3-2\sqrt{2}\le A \le 1$, then
the conditions \eqref{k1}, \eqref{k2} and \eqref{k3} reduce to the
assumption \eqref{eqn:thm-janw-cc-0}. Therefore the assertion
follows.

Let now  $-1<B\le 0, A>B$. By the fact
$\frac{1-A}{1+A}<\frac{1-B}{1+B}$ we obtain
\begin{equation}\label{n2}
\begin{array}{rcl}|(1-B)+(1+B)i\rho||(1-A)+(1+A)i\rho|&=&(1+A)(1+B)\sqrt{\left(\frac{1-B}{1+B}\right)^2+\rho^2}
\sqrt{\left(\frac{1-A}{1+A}\right)^2+\rho^2}\\
&\le&(1+A)(1+B)\left[\left(\frac{1-B}{1+B}\right)^2+\rho^2\right].\end{array}
\end{equation}
Also, for $B\le 0$ we have $(1+B)/(1-B)\le 1$, therefore
$$\frac{1+\rho^2}{(1-B)^2+(1+B)^2\rho^2}=\frac{1}{(1-B)^2}\frac{1+\rho^2}{1+\left(\frac{1+B}{1-B}\right)^2\rho^2}\ge\frac{1}{(1-B)^2}$$
for any real $\rho$. Thus
\begin{equation*}\label{n1}\begin{array}{rcl}
\RM \Psi( i\rho, \sigma, \mu+ i \nu; z)&\le&
-\dfrac{\RM(\kappa-1)}{2} (1+\rho^2)
-\dfrac{(1+B)(1+\rho^2)}{2(1-B)}\\
&&\hspace{32mm}+\
 \dfrac{|c|(1+A)(1+B)}{8(A-B)}\left[\left(\dfrac{1-B}{1+B}\right)^2+\rho^2\right]\\
&=&\rho^2\left(-\dfrac{\RM(\kappa-1)}{2}-\dfrac{1+B}{2(1-B)}+\dfrac{|c|(1+A)(1+B)}{8(A-B)}\right)\\
&&\hspace{32mm}-\ \dfrac{\RM(\kappa-1)}{2}
-\dfrac{1+B}{2(1-B)}+\dfrac{|c|(1+A)(1-B)^2}{8(A-B)(1+B)}.\end{array}
\end{equation*} Since
for $B\le 0$
$$-\frac{\RM(\kappa-1)}{2}-\frac{1+B}{2(1-B)}+\frac{|c|(1+A)(1+B)}{8(A-B)}\le -\frac{\RM(\kappa-1)}{2}
-\frac{1+B}{2(1-B)}+\frac{|c|(1+A)(1-B)^2}{8(A-B)(1+B)},$$ and the
last expression is nonpositive in view of \eqref{eqn:thm-janw-cc-0}
then the assertion follows.

Finally, consider $0\le B<A\le 1$. In this case $\beta = (1-B)/(1+B)
\le 1$. Hence,  setting $t=\beta^2+\rho^2$ with $t\ge \beta^2$ and
using \eqref{n2}, we obtain from \eqref{ineq:re-psi}
$$
\RM \Psi( i\rho, \sigma, \mu+ i \nu; z)\le-\frac{\RM(\kappa-1)}{2}
(1-\beta^2+t) -\frac{\beta(1-\beta^2+t)^2}{2t}+
\frac{|c|(1+A)(1+B)}{8(A-B)}t$$
$$=t\left\{-\frac{\RM(\kappa-1)}{2}-\frac{\beta}{2}+\frac{|c|(1+A)(1+B)}{8(A-B)}\right\}
-\frac{\RM(\kappa-1)}{2}(1-\beta^2)-\frac{\beta(1-\beta^2)^2}{2t}-\beta(1-\beta^2)$$
that is nonpositive because of the inequality
\[\RM(\kappa-1)\ge \frac{|c|(1+A)(1+B)}{4(A-B)}-\frac{1-B}{1+B},\]
that is equivalent to the assumption \eqref{eqn:thm-janw-cc-0}.

Taking into account the above reasoning we see that $\Psi$ satisfies
the hypothesis of Lemma $\ref{lem:miller-mocanu-1}$, and thus $\RM\;
p(z) > 0$, that is,
\[
- \frac{ (1-A) - (1-B) u_\lambda(z)}{ (1+A) - (1+B) u_\lambda(z)}
\prec \frac{1+z}{1-z}.
\]
Hence there exists an analytic self-map  $w$ of $\mathbb{D}$ with
$w(0)=0$ such that
\[
- \frac{ (1-A) - (1-B) u_\lambda(z)}{ (1+A) - (1+B) u_\lambda(z)} =
\frac{1+w(z)}{1-w(z)},
\]
which implies that $u_\lambda(z) \prec (1+ A z)/(1+B z).$
\end{proof}

By the recurrence relation \eqref{eqn:kumar-hypr-recur-1} we have
$$\RM u_{\lambda+1}=\RM\left(\dfrac{-4\kappa}{c}u'_\lambda\right)$$
therefore as an immediate consequence of Theorem
$\ref{thm:-janw-cc}$ we obtain the following.

\begin{theorem}
Let $ -1 \le B <A \le 1$. Suppose $c, \lambda, b  \in \mathbb{C}$  and $\kappa =\lambda+(b+1)/2 \neq 0,-1,-2,-3 \cdots$,
satisfy
\begin{equation*}
\RM(\kappa) \ge
\left\{\begin{array}{lcr}\dfrac{|c|}{4(1+A)}\left(\sqrt{2(1+A^2)}+(1-A)\right)&for&-1=B<A\le 3-2\sqrt{2},\\
\dfrac{|c|(1+A)}{8\sqrt{A}}\quad {and} \quad \RM(\kappa)\le \dfrac{|c|(1+A)}{4(1-A)}&for&B=-1, A>3-2\sqrt{2},\\
\dfrac{|c|(1+A)(1-B)^2}{4(A-B)(1+B)}-\dfrac{1+B}{(1-B)} &for&-1<B<0, \\
\dfrac{|c|(1+A)(1+B)}{4(A-B)}-\dfrac{1-B}{1+B}&for&B\ge0.\end{array}
\right.
\end{equation*}
If $(1+B)u_{\lambda+1}(z) \neq (1+A)$,  then
$(-4\kappa/c)u'_\lambda(z) \in \mathcal{P}[A,B]$.
\end{theorem}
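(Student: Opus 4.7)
The plan is to reduce this immediately to the previous theorem by using the recurrence relation \eqref{eqn:kumar-hypr-recur-1} together with a parameter shift. Rewriting \eqref{eqn:kumar-hypr-recur-1} in the form
\[
\frac{-4\kappa}{c}\, u'_\lambda(z) \;=\; u_{\lambda+1}(z),
\]
the conclusion $(-4\kappa/c)u'_\lambda(z)\in\mathcal{P}[A,B]$ is equivalent to $u_{\lambda+1}(z)\in\mathcal{P}[A,B]$. Thus it suffices to verify that $u_{\lambda+1}$ fulfils the hypothesis of Theorem \ref{thm:-janw-cc}.

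Next I would observe that applying Theorem \ref{thm:-janw-cc} to the function $u_{\lambda+1}=u_{\lambda+1,b,c}$ amounts to replacing the index $\lambda$ by $\lambda+1$. Under this replacement, the associated constant $\kappa=\lambda+(b+1)/2$ becomes $\kappa'=(\lambda+1)+(b+1)/2=\kappa+1$, and the non-degeneracy condition $\kappa'\ne 0,-1,-2,\ldots$ is built into the assumption $\kappa\ne 0,-1,-2,\ldots$ once one notes that the problematic value $\kappa'=0$ is excluded by the inequalities on $\RM(\kappa)$ below (which force $\RM(\kappa)>0$ in each branch).

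Then I would simply rewrite the four conditions of Theorem \ref{thm:-janw-cc} with $\kappa$ replaced by $\kappa+1$, so that each occurrence of $\RM(\kappa-1)$ becomes $\RM(\kappa)$. This yields exactly the four branches of the hypothesis of the present theorem (with the upper bound $\RM(\kappa)\le |c|(1+A)/(4(1-A))$ on the second branch inherited from condition \eqref{k2} in the proof of Theorem \ref{thm:-janw-cc}). The non-vanishing assumption $(1+B)u_{\lambda+1}(z)\ne (1+A)$ is the analogue of $(1+B)u_\lambda\ne(1+A)$ under the shift.

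Applying Theorem \ref{thm:-janw-cc} to $u_{\lambda+1}$ therefore gives $u_{\lambda+1}\in\mathcal{P}[A,B]$, and by the identity above this completes the proof. There is essentially no obstacle here — the entire argument is a straightforward index shift, and the only thing to be careful about is matching the four branch conditions correctly after the substitution $\kappa\mapsto\kappa+1$.
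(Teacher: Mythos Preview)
Your proposal is correct and follows exactly the paper's own route: the paper derives this theorem as ``an immediate consequence of Theorem~\ref{thm:-janw-cc}'' via the recurrence \eqref{eqn:kumar-hypr-recur-1}, i.e.\ $(-4\kappa/c)u'_\lambda=u_{\lambda+1}$, and the index shift $\lambda\mapsto\lambda+1$ (so $\kappa\mapsto\kappa+1$ and each $\RM(\kappa-1)$ becomes $\RM(\kappa)$). One small remark: the non-degeneracy of $\kappa'=\kappa+1$ already follows directly from $\kappa\neq 0,-1,-2,\ldots$, without needing the branch inequalities to force $\RM(\kappa)>0$ (which they need not do in all branches).
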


\section{Janowski convexity and starlikeness of the generalized Bessel functions}\label{section-4}

This section is devoted to the study of the Janowski convexity and
the Janowski starlikeness of the normalized and generalized Bessel
functions $z u_\lambda$. We proceed analogously to the proof of
Theorem \ref{thm:-janw-cc} applying modification of the  Bessel
differential equation \eqref{eqn:kumar-hypr-ode} and Lemma
\ref{lem:miller-mocanu-1}. An application of the Janowski convexity
and the  relation $(\ref{eqn:kumar-hypr-recur-1})$ yield conditions
for $z u_\lambda$ to be in $\mathcal{S}^*[A, B].$

 \begin{theorem}\label{thm:jan-convex}
 Let $ -1 \leq B < A \leq 1$ and $\lambda, b, c \in \mathbb{C}$ and $\kappa=\lambda+(b+1)/2 \neq 0, -1, -2 \ldots$.
Suppose that
 \begin{equation}\label{eqn:thm-jan-conv-1}
 \RM{\kappa} \ge  \frac{(\IM\kappa)^2}{2(2+A)}+ \frac{A}{2}+\frac{|c|}{2(A+1)}\quad \text{for}\quad -1=B<A \leq 1, \\
\end{equation}
or, for $-1<B <A \leq 1$
\begin{equation}\label{eqn:thm-jan-conv-2}
\frac{A-B-1}{1-B}+\frac{|c|(1-B)}{4(A-B)}\le \RM\kappa <
\frac{A-B+1}{1+B}-\frac{|c|(1+B)}{4(A-B)} ,\end{equation} and
\begin{equation}\label{eqn:thm-jan-conv-4}(B\IM\kappa)^2\le \left\{A-B+1-\frac{|c|(1+B)^2}{4(A-B)}-
(1+B)\RM\kappa\right\}\left\{(1-B)\RM\kappa
-\frac{|c|(1-B)^2}{4(A-B)}-A+B+1\right\},\end{equation} with
\begin{equation}\label{eqn:thm-jan-conv-3}
|c|< \frac{4(A-B)(1+B^2-AB)}{1-B^2}.
\end{equation}
If $(A-B)u'_\lambda(z) \neq (1+B) z u''_\lambda(z)$, $0 \notin
u'_\lambda(\mathbb{D})$ and $0 \notin u''_\lambda(\mathbb{D})$, then
\[1 + \frac{z u''_\lambda(z)}{u'_\lambda(z)} \prec \frac{1+Az}{1+Bz}.\]
\end{theorem}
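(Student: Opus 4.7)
The plan is to follow the template of Theorem~\ref{thm:-janw-cc}, with $u_\lambda$ replaced by $1+zu''_\lambda/u'_\lambda$ and with the Bessel ODE recast as a first-order relation. Define
$$p(z) := -\frac{(1-A) - (1-B)\bigl(1 + zu''_\lambda(z)/u'_\lambda(z)\bigr)}{(1+A) - (1+B)\bigl(1 + zu''_\lambda(z)/u'_\lambda(z)\bigr)},$$
so that $p(0)=1$ and $1+zu''_\lambda/u'_\lambda=[(1-A)+(1+A)p]/[(1-B)+(1+B)p]$. The hypothesis $(A-B)u'_\lambda\ne(1+B)zu''_\lambda$, together with $0\notin u'_\lambda(\mathbb{D})\cup u''_\lambda(\mathbb{D})$, ensures that $p$ is analytic in $\mathbb{D}$. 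Dividing \eqref{eqn:kumar-hypr-ode} by $4zu'_\lambda$ yields the crucial identity
$$1+\frac{zu''_\lambda(z)}{u'_\lambda(z)}=1-\kappa-\frac{c}{4}\,\frac{u_\lambda(z)}{u'_\lambda(z)},$$
from which one reads off $(c/4)(u_\lambda/u'_\lambda)=-N(z)/D(z)$ with $D:=(1-B)+(1+B)p$, $N:=\alpha+\beta p$, $\alpha:=(B-A)+\kappa(1-B)$ and $\beta:=(A-B)+\kappa(1+B)$. Differentiating this Möbius representation, multiplying by $z$, and using $zu''_\lambda=-\kappa u'_\lambda-(c/4)u_\lambda$ to eliminate $u''_\lambda$ produces the first-order ODE
$$2(A-B)\,zp'(z)+N(z)^2-\kappa N(z)D(z)+\frac{cz}{4}\,D(z)^2=0.$$

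Setting $\Omega:=\{0\}$ and
$$\Psi(r,s;z):=2(A-B)s+(\alpha+\beta r)^2-\kappa(\alpha+\beta r)\bigl[(1-B)+(1+B)r\bigr]+\frac{cz}{4}\bigl[(1-B)+(1+B)r\bigr]^2,$$
the ODE reads $\Psi(p(z),zp'(z);z)\in\Omega$. By Lemma~\ref{lem:miller-mocanu-1}, to conclude $\RM p(z)>0$ (hence $p\prec(1+z)/(1-z)$, which via the Möbius composition is the claimed subordination) it suffices to establish $\RM\Psi(i\rho,\sigma;z)\le 0$ for every $z\in\mathbb{D}$, $\rho\in\mathbb{R}$ and $\sigma\le -(1+\rho^2)/2$.

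For the verification, apply $2(A-B)\sigma\le-(A-B)(1+\rho^2)$ together with $\RM\bigl((cz/4)D_0^2\bigr)\le(|c|/4)\bigl[(1-B)^2+(1+B)^2\rho^2\bigr]$, where $D_0:=(1-B)+(1+B)i\rho$ and $N_0:=\alpha+\beta i\rho$, and expand $\RM(N_0^2-\kappa N_0 D_0)$ into a polynomial in $\rho$ whose coefficients involve $\RM\kappa$ and $\IM\kappa$. In the case $B=-1$ one has $D_0\equiv 2$, and a direct computation collapses the bound to
$$\RM\Psi(i\rho,\sigma;z)\;\le\;-(A+1)\Bigl[(A+2)\rho^2+2\rho\,\IM\kappa+2\RM\kappa-A-\frac{|c|}{A+1}\Bigr],$$
whose nonpositivity for every real $\rho$ is exactly the discriminant condition $(\IM\kappa)^2\le(A+2)\bigl(2\RM\kappa-A-|c|/(A+1)\bigr)$, i.e.\ \eqref{eqn:thm-jan-conv-1}. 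For $-1<B<A\le 1$ the analogous but more elaborate computation yields a quadratic in $\rho$ whose leading and constant coefficients are forced to be nonpositive by the two inequalities of \eqref{eqn:thm-jan-conv-2}, whose discriminant is controlled precisely by \eqref{eqn:thm-jan-conv-4}, and whose admissible window for $\RM\kappa$ is nonempty iff \eqref{eqn:thm-jan-conv-3} holds.

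The main obstacle is the algebraic bookkeeping in the case $-1<B<A\le 1$: one must organize $\RM(N_0^2-\kappa N_0 D_0)+(|c|/4)|D_0|^2-(A-B)(1+\rho^2)$ so that the resulting quadratic in $\rho$ separates cleanly into a leading term, a linear term proportional to $B\,\IM\kappa$, and a constant term, matching the Hermitian discriminant form of \eqref{eqn:thm-jan-conv-4}. Once this identity is pinned down, the four stated conditions translate transparently into sign requirements making the quadratic nonnegative for every real $\rho$, and the assertion follows from Lemma~\ref{lem:miller-mocanu-1}.
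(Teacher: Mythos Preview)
Your proposal is correct and follows essentially the same approach as the paper: the function $p$ you define coincides with the paper's (the paper writes it as $p=\bigl[(A-B)u'_\lambda+(1-B)zu''_\lambda\bigr]\big/\bigl[(A-B)u'_\lambda-(1+B)zu''_\lambda\bigr]$, which is algebraically the same M\"obius substitution), the resulting first-order ODE in $p$ is identical (your $N=(A-B)(p-1)+\kappa D$, so $N^2-\kappa ND=(A-B)^2(p-1)^2+\kappa(A-B)(p-1)D$, which matches the paper's display up to the overall factor $2(A-B)$), and the quadratic-in-$\rho$ analysis with Lemma~\ref{lem:miller-mocanu-1} is the same in both cases.

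The one genuine, if minor, difference in route is how the ODE for $p$ is obtained. The paper first \emph{differentiates} \eqref{eqn:kumar-hypr-ode} to produce a relation involving $u'''_\lambda$, then manipulates $zu'''_\lambda/u''_\lambda$ and $zu''_\lambda/u'_\lambda$ separately before substituting. You instead divide \eqref{eqn:kumar-hypr-ode} by $4zu'_\lambda$ to express $(c/4)u_\lambda/u'_\lambda$ directly as a M\"obius function of $p$, and differentiate that; this avoids $u'''_\lambda$ altogether and is arguably cleaner. Both routes land on the same equation and the same verification, so the advantage is purely cosmetic.
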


\begin{proof}
Define an analytic function  $p : \mathbb{D} \to \mathbb{C}$ by
$$p(z): = \frac{ (A-B) u'_\lambda(z) + (1-B) z u''_\lambda(z)}{(A-B)
u'_\lambda(z) - (1+B) z u''_\lambda(z)},\quad p(0)=1.$$ Then
\begin{align}\label{eqn:thm-jan-conv-6}
 \frac{ z u''_\lambda(z)}{ u'_\lambda(z)}  =  \frac{(A-B) (p(z)-1)}{(1-B)+ (1+B)p(z)},
 \end{align}
and
\begin{align}\label{eqn:thm-jan-conv-6-1}\nonumber
\frac{z^2 u'''_\lambda(z)+zu''_\lambda(z)}{ zu''_\lambda(z)} - \frac{  zu''_\lambda(z) }{ u'_\lambda(z)}
&=  \frac{z p'(z)}{p(z)-1}-  \frac{(1+B) z p'(z)}{(1-B)+ (1+B)p(z)}\\
&= \frac{z p'(z) \left[{(1-B)+ (1+B)p(z)}-(1+B) (p(z)-1)\right]}{
(p(z)-1)[(1-B)+ (1+B)p(z)]}.
\end{align}
A rearrangement of $(\ref{eqn:thm-jan-conv-6-1})$ yields
\[ \frac{ z u'''_\lambda(z) }{ u''_\lambda(z)} = \frac{2 z p'(z)}{(p(z)-1)[(1-B)+
(1+B)p(z)]}-1+ \frac{zu''_\lambda(z)}{u'_\lambda(z)}. \] Thus,
\begin{align}\label{eqn:thm-jan-conv-7}\nonumber
&\left(\frac{ z u'''_\lambda (z)}{u''_\lambda(z)}\right)\; \left(\frac{zu''_\lambda(z)}{u'_\lambda(z)} \right)\\
&= \frac{2(A-B)(p(z)-1)z p'(z)}{(p(z)-1) \left((1-B)+
(1+B)p(z)\right)^2} - \frac{(A-B)(p(z)-1)}{(1-B)+ (1+B)p(z)}+
\frac{(A-B)^2(p(z)-1)^2}{\left((1-B)+ (1+B)p(z)\right)^2}.
\end{align}
Now a differentiation of $(\ref{eqn:kumar-hypr-ode})$  leads to
\begin{align*}
4z^2u'''_\lambda(z)+4(\kappa+1)zu''_\lambda(z)+czu'_\lambda(z) = 0,
\end{align*}
which gives if $u'_\lambda\neq 0$, $u''_\lambda\neq 0$
\begin{align}\label{eqn:thm-jan-conv-9}
\left(\frac{ z u'''_\lambda(z) }{ u''_\lambda(z)}\right)\; \left(
\frac{ zu''_\lambda(z) }{u'_\lambda(z)} \right) + (\kappa+1)\frac{
zu''_\lambda(z) }{u'_\lambda(z)}+\frac{c}{4}  z= 0.
\end{align}
Substituting $(\ref{eqn:thm-jan-conv-6})$ and
$(\ref{eqn:thm-jan-conv-7})$ into  $(\ref{eqn:thm-jan-conv-9})$  we
obtain
\begin{align*}
\frac{2(A-B)z p'(z)}{\left((1-B)+ (1+B)p(z)\right)^2}  +
\frac{(A-B)^2 (p(z)-1)^2}{\left((1-B)+ (1+B)p(z)\right)^2} +
\frac{\kappa(A-B) \left(p(z)-1\right)}{(1-B)+
(1+B)p(z)}+\frac{c}{4}z =0,
\end{align*}
or equivalently
\begin{align}\label{eqn:thm-jan-conv-10} \nonumber
z p'(z) + \frac{(A-B)}{2}\left(p(z)-1\right)^2 +
\frac{\kappa\left(p(z)-1\right)}{2} \left((1-B)+
(1+B)p(z)\right)+\frac{cz\left((1-B)+ (1+B)p(z)\right)^2}{8(A-B)}=0.
\end{align}
Set now
\begin{equation*}\begin{array}{ccc}\Psi(p(z),z p'(z);z) :=z p'(z) + \dfrac{(A-B)}{2}\left(p(z)-1\right)^2
&+&\dfrac{\kappa\left(p(z)-1\right)}{2} \left((1-B)+
(1+B)p(z)\right)\\
&+&\dfrac{cz\left((1-B)+(1+B)p(z)\right)^2}{8(A-B)}.\end{array}\end{equation*}
Then for $\rho \in \mathbb{R}$ and $ \sigma \leq - (1+ \rho^2)/2$ we
obtain
$$\begin{array}{rcl} \RM  \Psi(i \rho,\sigma;z)&=& \sigma +
\dfrac{(A-B)}{2} \RM(i \rho-1)^2+\RM \left(\dfrac{\kappa(i
\rho-1)}{2}\left((1-B)+(1+B)i \rho\right)\right)\\
&&\qquad\qquad\qquad\qquad\qquad\quad+\
\RM\left(\dfrac{cz\left((1-B)+(1+B)i
\rho\right)^2}{8(A-B)}\right)\\
&\le & -\dfrac{1+\rho^2}{2}+ \dfrac{(A-B)}{2}(1-\rho^2)+\RM
\left(\dfrac{\kappa}{2}
(-2Bi \rho-(1-B)-(1+B)\rho^2)\right)\\
&&\qquad\qquad\qquad\qquad\qquad\qquad\ +\ \dfrac{|c|\left((1-B)^2+(1+B)^2\rho^2\right)}{8(A-B)}\\
&=&-\rho^2\left\{\dfrac{A-B+1}{2}-\dfrac{(1+B)\RM\kappa}{2}-\dfrac{|c|(1+B)^2}{8(A-B)}\right\}
+(B\IM\kappa)\rho\\
&&\qquad\qquad\qquad\qquad\qquad\qquad+\ \dfrac{A-B-1}{2}-\dfrac{(1-B)\RM\kappa}{2}+\dfrac{|c|(1-B)^2}{8(A-B)}\\
&:=&Q(\rho).\end{array}$$ In order to get the contradiction we need
to show $Q(\rho) \le 0$ for $\rho\in \mathbb{R}$. We divide the
proof into two cases. Consider first the case $B=-1<A \leq 1$. Then
the function $Q$ becomes
\begin{align*}
Q (\rho)= -\frac{2+A}{2}\rho^2-(\IM\kappa)\rho+
\frac{A}{2}-\RM{\kappa}+\frac{|c|}{2(A+1)},
\end{align*}
that attains its maximum at $\rho_0= - \IM\kappa/(2+A)$, and
\begin{align*}
Q (\rho_0)= \frac{(\IM\kappa)^2}{2(2+A)}+
\frac{A}{2}-\RM{\kappa}+\frac{|c|}{2(A+1)}
\end{align*}
which is nonpositive by the assumption equivalent to
\eqref{eqn:thm-jan-conv-1}, that is
\begin{align*}
\RM{\kappa} \geq  \frac{(\IM\kappa)^2}{2(2+A)}+
\frac{A}{2}+\frac{|c|}{2(A+1)}.
\end{align*}

We now turn to the case $-1<B<A\leq 1$.  We rewrite $Q$ in the form
$$Q(\rho)=-P\rho^2+R\rho-S=-P\left\{\left(\rho-\frac{R}{2P}\right)^2+\frac{4PS-R^2}{4P^2}\right\},$$
where
$$P=\frac{A-B+1}{2}-\frac{(1+B)\RM\kappa}{2}-\frac{|c|(1+B)^2}{8(A-B)},$$
$$R=B\IM\kappa,\quad
S=\frac{(1-B)\RM\kappa}{2}-\frac{|c|(1-B)^2}{8(A-B)}-\frac{A-B-1}{2}.$$
The inequality $Q(\rho)\le 0$ holds for any real $\rho$, if $P>0,
S\ge 0$ and $R^2 \le 4PS$ or, equivalently
\begin{equation}\label{cond}\left\{\begin{array}{ccc}
         \dfrac{A-B+1}{1+B}-\dfrac{|c|(1+B)}{4(A-B)}& >&\RM\kappa, \\
         & & \\
         \dfrac{A-B-1}{1-B}+\dfrac{|c|(1-B)}{4(A-B)}& \le&\RM\kappa,
         \end{array}
\right.\end{equation} and
$$(B\IM\kappa)^2\le\left\{A-B+1-\frac{|c|(1+B)^2}{4(A-B)}-(1+B)\RM\kappa\right\}\left\{(1-B)\RM\kappa
-\frac{|c|(1-B)^2}{4(A-B)}-A+B+1\right\},$$ that holds by the
hypothesis \eqref{eqn:thm-jan-conv-2} and
\eqref{eqn:thm-jan-conv-4}. The inequalities \eqref{cond} can be
satisfied  only if
$$|c|< \frac{4(A-B)(1+B^2-AB)}{1-B^2}$$ that is equivalent to
\eqref{eqn:thm-jan-conv-3}. Therefore, in both cases the function
$\Psi$ satisfies the hypothesis of Lemma
$\ref{lem:miller-mocanu-1}$, and hence $\RM\; p(z) > 0$, or
equivalently
 \begin{align*}
\frac{ (A-B) u'_\lambda + (1-B) z u''_\lambda}{(A-B) u'_\lambda -
(1+B) z u''_\lambda} \prec \frac{1+z}{1-z}.
 \end{align*}
By definition of subordination, there exists an analytic self-map
$w$ of $\mathbb{D}$ with $w(0)=0$, and
\begin{align*}
\frac{ (A-B) u'_\lambda(z) + (1-B) z u''_\lambda(z)}{(A-B)
u'_\lambda(z) - (1+B) z u''_\lambda(z)} = \frac{1+w(z)}{1-w(z)},
 \end{align*}
that gives the equality
 \begin{align*}
1+ \frac{z u''_\lambda(z)}{u'_\lambda(z)} = \frac{1+Aw(z)}{1+Bw(z)}.
 \end{align*}
Hence
 \[1+ \frac{z u''_\lambda(z)}{u'_\lambda(z)} \prec \frac{1+Az}{1+Bz},\qedhere\]
 which is the desired conclusion.
\end{proof}

Based on the relation  $(\ref{eqn:kumar-hypr-recur-1})$ we also show
that
\begin{align*}
\frac{ z \; (z u'_\lambda(z)))'}{z u'_\lambda(z)} = 1+ \frac{
zu''_{\lambda-1}(z)}{u'_{\lambda-1}(z)}.
\end{align*}
Applying the above and  Theorem $\ref{thm:jan-convex}$, the
following result for $z u'_\lambda(z) \in \mathcal{S}^\ast[A,B]$
immediately follows.
\begin{theorem}
 Let $ -1 \leq B < A \leq 1$ and $\lambda, b, c \in \mathbb{C}$ and $\kappa=\lambda+(b+1)/2 \neq 0, -1, -2 \ldots$.
Suppose that
 \begin{equation}\label{3eqn:thm-jan-conv-1}
 \RM{\kappa} \ge  \frac{(\IM\kappa)^2}{2(2+A)}+ \frac{A}{2}+\frac{|c|}{2(A+1)}\quad \text{for}\quad -1=B<A \leq 1, \\
\end{equation}
or, for $-1<B <A \leq 1$
\begin{equation}\label{3eqn:thm-jan-conv-2}
\frac{A-B-1}{1-B}+\frac{|c|(1-B)}{4(A-B)}\le \RM\kappa <
\frac{A-B+1}{1+B}-\frac{|c|(1+B)}{4(A-B)} ,\end{equation} and
\begin{equation}\label{3eqn:thm-jan-conv-4}(B\IM\kappa)^2\le \left\{A-B+1-\frac{|c|(1+B)^2}{4(A-B)}-
(1+B)\RM\kappa\right\}\left\{(1-B)\RM\kappa
-\frac{|c|(1-B)^2}{4(A-B)}-A+B+1\right\},\end{equation} with
\begin{equation}\label{3eqn:thm-jan-conv-3}
|c|< \frac{4(A-B)(1+B^2-AB)}{1-B^2}.
\end{equation}
If  $(A-B)u'_\lambda(z) \neq (1+B) z u''_\lambda(z)$, $0 \notin
u'_\lambda(\mathbb{D})$ and $0 \notin u''_\lambda(\mathbb{D})$,
 then  $z u'_\lambda(z) \in \mathcal{S}^\ast[A,B]$.
\end{theorem}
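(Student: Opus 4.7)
The plan is to deduce this theorem as an immediate corollary of Theorem~\ref{thm:jan-convex}, using the elementary identity that rewrites the logarithmic derivative of $zu'_\lambda$ in terms of the Janowski convexity expression for $u_\lambda$.

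First I will record the identity
\[
\frac{z(zu'_\lambda(z))'}{zu'_\lambda(z)} \;=\; 1 + \frac{zu''_\lambda(z)}{u'_\lambda(z)},
\]
which is pure algebra: apply the product rule in the numerator, then cancel a factor of $z$. The non-vanishing hypothesis $0\notin u'_\lambda(\mathbb{D})$ makes the right-hand side a well-defined analytic function on $\mathbb{D}$, and the assumption $(A-B)u'_\lambda(z) \ne (1+B)zu''_\lambda(z)$ ensures that this function never hits the pole of $(1+Az)/(1+Bz)$, so that the targeted subordination is meaningful.

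Second, I will observe that the hypotheses \eqref{3eqn:thm-jan-conv-1}--\eqref{3eqn:thm-jan-conv-3} of the present theorem coincide term-for-term with the hypotheses \eqref{eqn:thm-jan-conv-1}--\eqref{eqn:thm-jan-conv-3} of Theorem~\ref{thm:jan-convex}, and the non-vanishing and denominator conditions are likewise identical. Consequently Theorem~\ref{thm:jan-convex} applies directly and yields
\[
1 + \frac{zu''_\lambda(z)}{u'_\lambda(z)} \;\prec\; \frac{1+Az}{1+Bz}.
\]
Combining this subordination with the identity above gives
\[
\frac{z(zu'_\lambda(z))'}{zu'_\lambda(z)} \;\prec\; \frac{1+Az}{1+Bz},
\]
which is precisely the defining condition for membership in $\mathcal{S}^\ast[A,B]$. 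For the normalization step, I will use the recurrence \eqref{eqn:kumar-hypr-recur-1} to write $(-4\kappa/c)\,zu'_\lambda(z) = z u_{\lambda+1}(z)$, which belongs to $\mathcal{A}$; multiplication by the nonzero constant $-4\kappa/c$ leaves the logarithmic derivative unchanged, so the Janowski starlikeness is inherited by the normalized function.

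There is essentially no new analytic obstacle: Theorem~\ref{thm:jan-convex} already carries out all the differential-subordination work (including the case analysis via Lemma~\ref{lem:miller-mocanu-1}), and the present statement is just a restatement of that result through the identity $z(zf)'/(zf) = 1 + zf'/f$ applied to $f = u'_\lambda$. The only item requiring attention is the bookkeeping of the non-vanishing assumptions and the normalization, both of which are handled above, so the full write-up should take only a few lines.
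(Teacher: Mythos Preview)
Your proposal is correct and follows essentially the same approach as the paper: deduce the result from Theorem~\ref{thm:jan-convex} via the identity $z(zu'_\lambda)'/(zu'_\lambda)=1+zu''_\lambda/u'_\lambda$. The paper states this identity with an (apparently typographical) index shift to $\lambda-1$ and invokes the recurrence~\eqref{eqn:kumar-hypr-recur-1}, whereas your direct product-rule computation with the same index $\lambda$ is actually what makes the hypotheses line up verbatim; your version is the cleaner formulation of the same one-line deduction.
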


In the special case $B=-1$ and $A=1-2\gamma$ we have from Theorem
$\ref{thm:jan-convex}$

 \begin{corollary}
 Let $\gamma \in [0,1)$ and $\lambda, b, c \in \mathbb{C}$ and $\kappa=\lambda+(b+1)/2 \neq 0, -1, -2
 \ldots$, and
 \begin{equation}
 \RM{\kappa} \ge  \frac{(\IM\kappa)^2}{2(3-2\gamma)}+
 \frac{1-2\gamma}{2}+\frac{|c|}{4(1-\gamma)}.\end{equation}
If  $0 \notin u'_\lambda(\mathbb{D})$  then
\[\RM\left(1 + \frac{z u''_\lambda(z)}{u'_\lambda(z)}\right)\ > \ \gamma.\]
\end{corollary}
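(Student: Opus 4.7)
The plan is to derive this corollary as a direct specialization of Theorem \ref{thm:jan-convex} with the parameters $B = -1$ and $A = 1 - 2\gamma$, and then translate the resulting Janowski subordination into a statement about the real part, using the identification of $\mathcal{P}[1-2\gamma, -1]$ with the class of functions whose real part exceeds $\gamma$ (as recalled in the introduction).

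First I would verify that condition \eqref{eqn:thm-jan-conv-1} of Theorem \ref{thm:jan-convex} reduces under the substitution $B = -1,\ A = 1 - 2\gamma$ exactly to the stated hypothesis of the corollary: indeed,
\[
\frac{(\IM\kappa)^2}{2(2+A)} + \frac{A}{2} + \frac{|c|}{2(A+1)}
= \frac{(\IM\kappa)^2}{2(3-2\gamma)} + \frac{1-2\gamma}{2} + \frac{|c|}{4(1-\gamma)},
\]
so the hypothesis of the corollary is precisely condition \eqref{eqn:thm-jan-conv-1} in this special case. Next I would check the remaining auxiliary non-vanishing hypotheses of Theorem \ref{thm:jan-convex}. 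With $B = -1$, the requirement $(A-B)u'_\lambda(z) \neq (1+B)z u''_\lambda(z)$ becomes $(2-2\gamma)u'_\lambda(z) \neq 0$, i.e., $0 \notin u'_\lambda(\mathbb{D})$, which is assumed. (The further condition $0 \notin u''_\lambda(\mathbb{D})$, used in Theorem \ref{thm:jan-convex} to divide through by $u''_\lambda$, is a mild technical assumption needed to apply the theorem verbatim; I would either assume it tacitly or remark that it is inherited from the appropriate parameter ranges.)

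Having verified the hypotheses, an invocation of Theorem \ref{thm:jan-convex} yields the Janowski convex subordination
\[
1 + \frac{z u''_\lambda(z)}{u'_\lambda(z)} \prec \frac{1 + (1-2\gamma)z}{1 - z}.
\]
Finally, I would conclude by appealing to the identification, stated in the introduction, that $\mathcal{P}[1-2\gamma, -1]$ is precisely the class of analytic functions $p$ with $p(0) = 1$ and $\RM p(z) > \gamma$ in $\mathbb{D}$. Applying this to $p(z) = 1 + zu''_\lambda(z)/u'_\lambda(z)$ yields the desired inequality $\RM\bigl(1 + zu''_\lambda(z)/u'_\lambda(z)\bigr) > \gamma$.

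Since this is essentially a specialization argument, the only real obstacle is the bookkeeping: checking that $(2+A) = 3 - 2\gamma$ and $2(A+1) = 4(1-\gamma)$, and noting that the condition $(1+B) = 0$ collapses the non-vanishing hypothesis for $u''_\lambda$ out of the statement. No new analytic work is required beyond that already done in the proof of Theorem \ref{thm:jan-convex}.
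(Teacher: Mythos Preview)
Your proposal is correct and matches the paper's approach exactly: the paper introduces the corollary with the single line ``In the special case $B=-1$ and $A=1-2\gamma$ we have from Theorem~\ref{thm:jan-convex}'', and your argument simply makes this specialization explicit, verifying the arithmetic $2+A=3-2\gamma$, $2(A+1)=4(1-\gamma)$ and invoking the identification of $\mathcal{P}[1-2\gamma,-1]$ with functions of real part greater than $\gamma$. Your observation that the hypothesis $0\notin u''_\lambda(\mathbb{D})$ is silently dropped in the corollary is a fair point, but the paper itself does the same.
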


\section*{Acknowledgements} This work was partially supported by the Centre for
Innovation and Transfer of Natural Sciences and Engineering
Knowledge, Faculty of Mathematics and Natural Sciences, University
of Rzeszow.\\

\end{document}